\newcommand*\fullref[3][\relax]{%
  \ifdefined\hyperref%
    {\hyperref[#3]{#2\penalty 200\ \ref*{#3}#1}}%
  \else%
    {#2\penalty 200\ \relax\ref{#3}#1}%
  \fi%
}
\tikzset{
  bst/.style={
    standard/.style={
      font=\small,
      draw=gray,
      rounded rectangle,
      minimum width=4.5mm,
      minimum height=4.5mm,
      inner xsep=0mm,
      inner ysep=1mm,
      outer sep=0mm,
      line width=.5pt,
    },
    empty/.style={
      minimum width=3mm,
      minimum height=3mm,
    },
    triangle/.style={
      isosceles triangle,
      isosceles triangle apex angle=60,
      shape border rotate=90,
      rounded corners=2mm,
      minimum width=8mm,
      inner xsep=0mm,
      inner ysep=.5mm
    },
    blank/.style={
      draw=none,
    },
    nodecount/.style={
      blank,
      font=\scriptsize,
    },
    every node/.style={standard},
    every child/.style={draw=black,line width=.6pt},
    level distance=10mm,
    level 1/.style={sibling distance=60mm},
    level 2/.style={sibling distance=30mm},
    level 3/.style={sibling distance=15mm},
  },
  medbst/.style={
    bst,
    level distance=10mm,
    level 1/.style={sibling distance=15mm},
    level 2/.style={sibling distance=15mm},
    level 3/.style={sibling distance=15mm},
  },
  smallbst/.style={
    bst,
    level distance=8mm,
    level 1/.style={sibling distance=10mm},
    level 2/.style={sibling distance=10mm},
    level 3/.style={sibling distance=10mm},
  },
  tinybst/.style={
    bst,
    level distance=5mm,
    level 1/.style={sibling distance=8mm},
    level 2/.style={sibling distance=8mm},
    level 3/.style={sibling distance=8mm},
    every node/.append style={
      font=\footnotesize,
    },
    triangle/.append style={
      rounded corners=1mm,
      minimum width=7mm,
      inner xsep=-.5mm,
    },
  },
  microbst/.style={
    bst,
    standard/.append style={
      font=\scriptsize,
      minimum width=3mm,
      minimum height=3mm,
      inner ysep=.25mm,
    },
    level distance=3mm,
    level 1/.style={sibling distance=6mm},
    level 2/.style={sibling distance=6mm},
    level 3/.style={sibling distance=6mm},
  },
  nanobst/.style={
    bst,
    standard/.append style={
      font=\tiny,
      minimum width=2mm,
      minimum height=2mm,
      inner ysep=.25mm,
    },
    level distance=2mm,
    level 1/.style={sibling distance=4mm},
    level 2/.style={sibling distance=4mm},
    level 3/.style={sibling distance=4mm},
  },
}
\newif\ifpgf@rectanglewrc@donecorner@
\def\pgf@rectanglewithroundedcorners@docorner#1#2#3#4{%
  \edef\pgf@marshal{%
    \noexpand\pgfintersectionofpaths
      {%
        \noexpand\pgfpathmoveto{\noexpand\pgfpoint{\the\pgf@xa}{\the\pgf@ya}}%
        \noexpand\pgfpathlineto{\noexpand\pgfpoint{\the\pgf@x}{\the\pgf@y}}%
      }%
      {%
        \noexpand\pgfpathmoveto{\noexpand\pgfpointadd
          {\noexpand\pgfpoint{\the\pgf@xc}{\the\pgf@yc}}%
          {\noexpand\pgfpoint{#1}{#2}}}%
        \noexpand\pgfpatharc{#3}{#4}{\cornerradius}%
      }%
    }%
  \pgf@process{\pgf@marshal\pgfpointintersectionsolution{1}}%
  \pgf@process{\pgftransforminvert\pgfpointtransformed{}}%
  \pgf@rectanglewrc@donecorner@true
}
  \savedmacro\cornerradius{%
    \edef\cornerradius{\pgfkeysvalueof{/pgf/rectangle corner radius}}%
  }
    \edef\pgf@marshal{%
      \noexpand\pgfpointborderrectangle
      {\noexpand\pgfqpoint{\the\pgf@xb}{\the\pgf@yb}}
      {\noexpand\pgfqpoint{\the\pgf@xc}{\the\pgf@yc}}%
    }%
    \pgfextract@process\borderpoint{}%
       \pgf@rectanglewithroundedcorners@docorner{\cornerradius}{0pt}{0}{90}%
  \newcommand{\convexpath}[2]{
    [
    create hullcoords/.code={
      \global\edef\namelist{#1}
      \foreach [count=\counter] \nodename in \namelist {
        \global\edef\numberofnodes{\counter}
        \coordinate (hullcoord\counter) at (\nodename);
      }
      \coordinate (hullcoord0) at (hullcoord\numberofnodes);
      \pgfmathtruncatemacro\lastnumber{\numberofnodes+1}
      \coordinate (hullcoord\lastnumber) at (hullcoord1);
    },
    create hullcoords
    ]
    ($(hullcoord1)!#2!-90:(hullcoord0)$)
    \foreach [
    evaluate=\currentnode as \previousnode using \currentnode-1,
    evaluate=\currentnode as \nextnode using \currentnode+1
    ] \currentnode in {1,...,\numberofnodes} {
      let \p1 = ($(hullcoord\currentnode) - (hullcoord\previousnode)$),
      \n1 = {atan2(\y1,\x1) + 90},
      \p2 = ($(hullcoord\nextnode) - (hullcoord\currentnode)$),
      \n2 = {atan2(\y2,\x2) + 90},
      \n{delta} = {Mod(\n2-\n1,360) - 360}
      in
      {arc [start angle=\n1, delta angle=\n{delta}, radius=#2]}
      -- ($(hullcoord\nextnode)!#2!-90:(hullcoord\currentnode)$)
    }
  }
  \newcommand{\convexpath}[2]{
    [
    create hullcoords/.code={
      \global\edef\namelist{#1}
      \foreach [count=\counter] \nodename in \namelist {
        \global\edef\numberofnodes{\counter}
        \coordinate (hullcoord\counter) at (\nodename);
      }
      \coordinate (hullcoord0) at (hullcoord\numberofnodes);
      \pgfmathtruncatemacro\lastnumber{\numberofnodes+1}
      \coordinate (hullcoord\lastnumber) at (hullcoord1);
    },
    create hullcoords
    ]
    ($(hullcoord1)!#2!-90:(hullcoord0)$)
    \foreach [
    evaluate=\currentnode as \previousnode using \currentnode-1,
    evaluate=\currentnode as \nextnode using \currentnode+1
    ] \currentnode in {1,...,\numberofnodes} {
      let \p1 = ($(hullcoord\currentnode) - (hullcoord\previousnode)$),
      \n1 = {atan2(\x1,\y1) + 90},
      \p2 = ($(hullcoord\nextnode) - (hullcoord\currentnode)$),
      \n2 = {atan2(\x2,\y2) + 90},
      \n{delta} = {Mod(\n2-\n1,360) - 360}
      in
      {arc [start angle=\n1, delta angle=\n{delta}, radius=#2]}
      -- ($(hullcoord\nextnode)!#2!-90:(hullcoord\currentnode)$)
    }
  }
\tikzset{
  elementoutline/.style={
    draw=gray,
    fill=white,
    thick,
    rectangle with rounded corners,
    inner sep=.4mm,
  },
}
\tikzset{
  mogrifyarrow/.style={
    ->,
    >/.tip=Computer Modern Rightarrow,
    decorate,
    decoration={
      zigzag,
      amplitude=0.2em,
      segment length=0.35em,
      pre length=0.35em,
      post length=0.35em,
    },
  },
}
\tikzset{
  bstoutline/.style={darkgray,thick}
}
\tikzset{
  olsubword/.style={
    every node/.append style={
      name=mainnode,
      draw=darkgray,
      rounded corners=.5mm,
      inner sep=.5mm,
    },
    baseline=(mainnode.base),
  }
}
\newcommand*\olsubword[1]{\tikz[olsubword]\node{#1};}
\theoremstyle{definition}
\newtheorem{definition}{Definition}[section]
\newtheorem{algorithm}[definition]{Algorithm}
\newtheorem{example}[definition]{Example}
\theoremstyle{plain}
\newtheorem{lemma}[definition]{Lemma}
\newtheorem{proposition}[definition]{Proposition}
\numberwithin{equation}{section}
\newcommand*{\textparens}[1]{\textup{(}#1\textup{)}}
\newcommand*{\defterm}[1]{\emph{#1}}
\newcommand\chyph{\penalty\@M-\hskip\z@skip}
\DeclarePairedDelimiter{\parens}{\lparen}{\rparen}
\DeclarePairedDelimiter{\set}{\{}{\}}
\DeclarePairedDelimiterX{\gset}[2]{\{}{\}}{\,#1:#2\,}
\newcommand{\sizeddelimiter}[2]{\bBigg@{#1}#2}
\DeclarePairedDelimiterX{\pres}[2]{\langle}{\rangle}{#1\,\delimsize\vert\,\mathopen{}#2}
\newcommand*{\drel}[1]{\mathcal{#1}}
\newcommand*{\aA}{\mathcal{A}}
\newcommand*{\cochseq}{\mathrm{cochseq}}
\newcommand*{\plac}{{\mathsf{plac}}}
\newcommand*{\hypo}{{\mathsf{hypo}}}
\newcommand*{\sylv}{{\mathsf{sylv}}}
\newcommand*{\baxt}{{\mathsf{baxt}}}
\newcommand*{\stal}{{\mathsf{stal}}}
\newcommand*{\taig}{{\mathsf{taig}}}
\newcommand*{\sylvcong}{\equiv_\sylv}
\newcommand*{\plit}{\mathrm{P}}
\newcommand*{\psylv}[2][]{\plit_{\sylv}\parens[#1]{#2}}
\newcommand{\cyc}{\sim}
\begin{document}

\title[Combinatorics of cyclic shifts]{Combinatorics of cyclic shifts in plactic, hypoplactic, sylvester, and related monoids}

\author{Alan J. Cain}
\address{%
Centro de Matem\'{a}tica e Aplica\c{c}\~{o}es\\
Faculdade de Ci\^{e}ncias e Tecnologia\\
Universidade Nova de Lisboa\\
2829--516 Caparica\\
Portugal
}
\email{%
a.cain@fct.unl.pt
}
\thanks{The first author was supported by an Investigador {\sc FCT} fellowship ({\sc IF}/01622/2013/{\sc CP}1161/{\sc
    CT}0001).}

\author{Ant\'onio Malheiro}
\address{%
Departamento de Matem\'{a}tica \& Centro de Matem\'{a}tica e Aplica\c{c}\~{o}es\\
Faculdade de Ci\^{e}ncias e Tecnologia\\
Universidade Nova de Lisboa\\
2829--516 Caparica\\
Portugal
}
\email{%
ajm@fct.unl.pt
}
\thanks{For both authors, this work was partially supported by by the Funda\c{c}\~{a}o para a Ci\^{e}ncia e a
    Tecnologia (Portuguese Foundation for Science and Technology) through the project {\scshape UID}/{\scshape
      MAT}/00297/2013 (Centro de Matem\'{a}tica e Aplica\c{c}\~{o}es), and the project {\scshape PTDC}/{\scshape
      MHC-FIL}/2583/2014.}

\begin{abstract}
  The cyclic shift graph of a monoid is the graph whose vertices are elements of the monoid and whose edges link
  elements that differ by a cyclic shift. For certain monoids connected with combinatorics, such as the
  plactic monoid (the monoid of Young tableaux) and the sylvester monoid (the monoid of binary search trees), connected
  components consist of elements that have the same evaluation (that is, contain the same number of each generating
  symbol). This paper discusses new results on the diameters of connected components of the cyclic shift graphs of the
  finite-rank analogues of these monoids, showing that the maximum diameter of a connected component is dependent only on the
  rank. The proof techniques are explained in the case of the sylvester monoid.
\end{abstract}

\maketitle

\section{Introduction}

In a monoid $M$, two elements $s$ and $t$ are related by a cyclic shift, denoted $s \sim t$, if and only if there exist
$x,y \in M$ such that $s = xy$ and $t = yx$. In the plactic monoid (the monoid of Young tableaux, denoted $\plac$; see
\cite[Ch.~5]{lothaire_algebraic}), elements that have the same evaluation (that is, which contain the same number of
each symbol) can be obtained from each other by iterated application of cyclic shifts
\cite[\S~4]{lascoux_plaxique}. Furthermore, in the plactic monoid of rank $n$ (denoted $\plac_n$), it is known that
$2n-2$ applications of cyclic shifts are sufficient \cite[Theorem~17]{choffrut_lexicographic}.

To restate these results in a new form, define the \defterm{cyclic shift graph} $K(M)$ of a monoid $M$ to be the
undirected graph with vertex set $M$ and, for all $s,t \in M$, an edge between $s$ and $t$ if and only if $s \sim t$.
Connected components of $K(M)$ are $\sim^*$-classes (where $\sim^*$ is the reflexive and transitive closure of $\sim$),
since they consist of elements that are related by iterated cyclic shifts. Thus the results discussed above say that
each connected component of $K(\plac)$ consists of precisely the elements with a given evaluation, and that the diameter
of a connected component of $K(\plac_n)$ is at most $2n-2$. Note that connected components are of unbounded size,
despite there being a bound on diameters that is dependent only on the rank.

The plactic monoid is celebrated for its ubiquity, appearing in many diverse contexts (see the discussion and references in
\cite[Ch.~5]{lothaire_algebraic}). It is, however, just one member of a family of `plactic-like' monoids that are closely
connected with combinatorics. These monoids include the hypoplactic monoid (the monoid of quasi-ribbon tableaux)
\cite{krob_noncommutative4,novelli_hypoplactic}, the sylvester monoid (binary search trees) \cite{hivert_sylvester}, the
taiga monoid (binary search trees with multiplicities) \cite{priez_lattice}, the stalactic monoid (stalactic tableaux)
\cite{hivert_commutative,priez_lattice}, and the Baxter monoid (pairs of twin binary search trees)
\cite{giraudo_baxter2}. These monoids, including the plactic monoid, arise in a parallel way. For each monoid, there is
a so-called insertion algorithm that allows one to compute a combinatorial object (of the corresponding type) from a
word over the infinite ordered alphabet $\aA = \set{1 < 2 < 3 <\ldots}$; the relation that relates pairs of words that
give the same combinatorial object is a congruence (that is, it is compatible with multiplication in the free monoid
$\aA^*$). The monoid arises by factoring the free monoid $\aA^*$ by this congruence; thus elements of the monoid
(equivalence classes of words) are in one-to-one correspondence with the combinatorial objects. \fullref{Table}{tbl:monoids}
lists these monoids and their corresponding objects.

\begin{table}[t]
  \centering
  \caption{Monoids and corresponding combinatorial objects.}
  \label{tbl:monoids}
  \begin{tabular}{llll}
    \toprule
    \textit{Monoid} & \textit{Symbol} & \textit{Combinatorial object}          & \textit{Citation}               \\
    \midrule
    Plactic         & $\plac$       & Young tableau                          & \cite[ch.~5]{lothaire_algebraic} \\
    Hypoplactic     & $\hypo$       & Quasi-ribbon tableau                   & \cite{novelli_hypoplactic}      \\
    Stalactic       & $\stal$       & Stalactic tableau                      & \cite{hivert_commutative}       \\
    Sylvester       & $\sylv$       & Binary search tree                     & \cite{hivert_sylvester}         \\
    Taiga           & $\taig$       & Binary search tree with multiplicities & \cite[\S~5]{priez_lattice}            \\
    Baxter          & $\baxt$       & Pair of twin binary search trees       & \cite{giraudo_baxter2}          \\
    \bottomrule
  \end{tabular}
\end{table}

\begin{table}[t]
  \centering
  \caption{Properties of connected component of the cyclic shift graph for rank-$n$ monoids: whether they are characterized by evaluation, and known values and bounds for their maximum diameters.}
  \label{tbl:components}
  \begin{tabular}{lccccc}
    \toprule
                    &                       & \multicolumn{4}{c}{\textit{Maximum diameter}}                                                     \\
    \cmidrule(lr){3-6}
                    &                       &                      &                        & \multicolumn{2}{c}{\textit{Known bounds}}         \\
    \cmidrule(lr){5-6}
    \textit{Monoid} & \textit{Char. by evaluation} & \textit{Known value} & \textit{Conjecture}    & \textit{Lower}                   & \textit{Upper} \\
    \midrule
    $\plac_n$       & Y                     & ?                    & $n-1$                  & $n-1$                            & $2n-3$         \\
    $\hypo_n$       & Y                     & $n-1$                & ---                    & ---                              & ---            \\
    $\stal_n$       & N                     & $\begin{cases} n-1   & \text{if $n < 3$} \\
                                                             n     & \text{if $n \geq 3$}\end{cases}$ & --- & --- & ---                         \\
    $\sylv_n$       & Y                     & ?                    & $n-1$                  & $n-1$                            & $n$            \\
    $\taig_n$       & Y                     & ?                    & $n-1$                  & $n-1$                            & $n$            \\
    $\baxt_n$       & N                     & ?                    & ?                      & ?                                & ?              \\
    \bottomrule
  \end{tabular}
\end{table}

Analogous questions arise for the cyclic shift graph of each of these monoids. In a forthcoming paper
\cite{cm_cyclicshifts2}, the present authors make a comprehensive study of connected components in the cyclic shift
graphs of each of these monoids. For several of these monoids, it turns out that each connected component of its cyclic
shift graph consists of precisely the elements with a given evaluation, and that the diameters of connected component in
the rank-$n$ case are bounded by a quantity dependent on the rank. (Again, it should be emphasized that there is no
bound on the \emph{size} of these connected components.) In each case, the authors either establish the exact value of
the maximum diameter or give bounds; \fullref{Table}{tbl:components} summarizes the results from
\cite{cm_cyclicshifts2}. Also, although these monoids are multihomogeneous (words in $\aA^*$ representing the same
element contain the same number of each symbol), the authors also exhibit a rank~$4$ multihomogeneous monoid for which
there is no bound on the diameter of connected components. Thus it seems to be the underlying
combinatorial objects that ensure the bound on diameters.  This also is of interest because cyclic shifts are a possible
generalization of conjugacy from groups to monoids; thus the combinatorial objects are here linked closely to the
algebraic structure of the monoid.

The present paper illustrates these results by focussing on the sylvester monoid (denoted $\sylv$ or $\sylv_n$ in the
rank-$n$ case). (The authors previously proved that each connected component of $K(\sylv)$ consists of precisely the
elements with a given evaluation \cite[\S~3]{cm_conjugacy}.) \fullref{Section}{sec:sylv} recalls the definition and
necessary facts about the sylvester monoid. \fullref{Section}{sec:lowerbound} gives a complete proof that there is a connected
component in $K(\sylv_n)$ with diameter at least $n-1$; this establishes the lower bound on the maximum diameter shown
in \fullref{Table}{tbl:components}. The complete proof that every connected component of $K(\sylv_n)$ has diameter at
most $n$, establishing the upper maximum diameter shown in \fullref{Table}{tbl:components}, is very long and complicated. Thus
\fullref{Section}{sec:upperbound} gives the proof for connected components consisting of elements that contain each
symbol from $\set{1,\ldots,k}$ (for some $k$) exactly once; this avoids many of the complexities of the general case.

\section{Binary search trees and the sylvester monoid}
\label{sec:sylv}

This section gathers the relevant definitions and background on the sylvester monoid; see \cite{hivert_sylvester} for
further reading.

Recall that $\aA$ denotes the infinite ordered alphabet $\set{1 < 2 < \ldots}$. Fix a natural number $n$ and let
$\aA_n = \set{1 < 2 < \ldots < n}$ be the set of the first $n$ natural numbers, viewed as a finite ordered alphabet. A
word $u \in \aA^*$ is \defterm{standard} if it contains each symbol in $\set{1,\ldots,|u|}$ exactly once.

A \defterm{\textparens{right strict} binary search tree} (BST) is a rooted binary tree labelled by symbols from $\aA$,
where the label of each node is greater than or equal to the label of every node in its left subtree, and strictly less
than the label of every node in its right subtree. An example of a binary search tree is:
\begin{equation}
\label{eq:bsteg}
\begin{tikzpicture}[tinybst,baseline=-10mm]
  \node (root) {$4$}
    child[sibling distance=16mm] { node (0) {$2$}
      child { node (00) {$1$}
        child { node (000) {$1$} }
        child[missing]
      }
      child { node (01) {$4$} }
    }
    child[sibling distance=16mm] { node (1) {$5$}
      child { node (10) {$5$}
        child { node (100) {$5$} }
        child[missing]
      }
      child { node (11) {$6$}
        child[missing]
        child { node (111) {$7$} }
      }
    };
\end{tikzpicture}.
\end{equation}

The following algorithm inserts a new symbol into a BST, adding it as a leaf node in the unique place that maintains
the property of being a BST.

\begin{algorithm}
\label{alg:sylvinsertone}
\textit{Input:} A binary search tree $T$ and a symbol $a \in \aA_n$.

If $T$ is empty, create a node and label it $a$. If $T$ is non-empty, examine the label $x$ of the root
node; if $a \leq x$, recursively insert $a$ into the left subtree of the root node; otherwise recursively insert $a$
into the right subtree of the root note. Output the resulting tree.
\end{algorithm}

For $u \in \aA^*$, define $\psylv{u}$ to be the right strict binary search tree obtained by starting with the empty tree and inserting the
symbols of the word $u$ one-by-one using \fullref{Algorithm}{alg:sylvinsertone}, proceeding \emph{right-to-left} through $u$. For
example, $\psylv{5451761524}$ is \eqref{eq:bsteg}. Define the relation $\sylvcong$ by
\[
u \sylvcong v \iff \psylv{u} = \psylv{v},
\]
for all $u,v \in \aA^*$.  The relation $\sylvcong$ is a congruence, and the \defterm{sylvester monoid}, denoted $\sylv$,
is the factor monoid $\aA^*\!/{\sylvcong}$; the \defterm{sylvester monoid of rank $n$}, denoted $\sylv_n$, is the factor
monoid $\aA_n^*/{\sylvcong}$ (with the natural restriction of $\sylvcong$). Each element $[u]_{\sylvcong}$ (where
$u \in \aA^*$) can be identified with the binary search tree $\psylv{u}$. The monoid $\sylv$ is presented by
$\pres{\aA}{\drel{R}_\sylv}$, where
\[
\drel{R}_\sylv = \gset[\big]{(cavb,acvb)}{a \leq b < c,\; v \in \aA^*};
\]
the monoid $\sylv_n$ is presented by $\pres{\aA_n}{\drel{R}_\sylv}$, where the set of defining relations
$\drel{R}_\sylv$ is naturally restricted to $\aA_n^*\times \aA_n^*$. Notice that $\sylv$ and $\sylv_n$ are
multihomogeneous.

A \defterm{reading} of a binary search tree $T$ is a word formed from the symbols that appear in the nodes of $T$,
arranged so that the child nodes appear before parents. A word $w \in \aA^*$ is a reading of $T$ if and only if
$\psylv{w} = T$. The words in $[u]_{\sylvcong}$ are precisely the readings of $\psylv{u}$.

A binary search tree $T$ with $k$ nodes is \defterm{standard} if it has exactly one node labelled by each symbol in
$\set{1,\ldots,k}$; clearly $T$ is standard if and only if all of its readings are standard words.

The \defterm{left-to-right postfix traversal}, or simply the \defterm{postfix traversal}, of a rooted binary tree $T$ is
the sequence that `visits' every node in the tree as follows: it recursively perform the postfix traversal of the left
subtree of the root of $T$, then recursively perform the postfix traversal of the right subtree of the root of $T$, then
visits the root of $T$. The \defterm{left-to-right infix traversal}, or simply the \defterm{infix traversal}, of a
rooted binary tree $T$ is the sequence that `visits' every node in the tree as follows: it recursively performs the
infix traversal of the left subtree of the root of $T$, then visits the root of $T$, then recursively performs the infix
traversal of the right subtree of the root of $T$. Thus the postfix and infix traversals of any binary tree with the
same shape as \eqref{eq:bsteg} visit nodes as shown on the left and right below:
\[
\begin{tikzpicture}[tinybst,baseline=-7.5mm]
  \node (root) {}
    child[sibling distance=16mm] { node (0) {}
      child { node (00) {}
        child { node (000) {} }
        child[missing]
      }
      child { node (01) {} }
    }
    child[sibling distance=16mm] { node (1) {}
      child { node (10) {}
        child { node (100) {} }
        child[missing]
      }
      child { node (11) {}
        child[missing]
        child { node (111) {} }
      }
    };
  \begin{scope}[very thick,line cap=round]
    \draw (000.center) edge[bend left=30] (00.center);
    \draw (00.center) edge[bend right=30] (01.center);
    \draw (01.center) edge[bend left=30] (0.center);
    \draw (0.center) edge[bend left=40] (100.center);
    \draw (100.center) edge[bend right=30] (10.center);
    \draw (10.center) edge[bend left=10] (111.center);
    \draw (111.center) edge[bend right=30] (11.center);
    \draw (11.center) edge[bend right=30] (1.center);
    \draw (1.center) edge[bend right=30] (root.center);
  \end{scope}
  \draw[very thick] ($ (000.center) + (-4mm,0) $) -- (000.center);
  \draw[very thick,->] (root.center) -- ($ (root.center) + (0,4mm) $);
  \foreach\x in {000,00,01,0,100,10,111,11,1,root} {
    \draw[draw=black,fill=black] (\x.center) circle (.66mm);
  }
\end{tikzpicture}
\qquad
\begin{tikzpicture}[tinybst,baseline=-7.5mm]
  \node (root) {}
    child[sibling distance=16mm] { node (0) {}
      child { node (00) {}
        child { node (000) {} }
        child[missing]
      }
      child { node (01) {} }
    }
    child[sibling distance=16mm] { node (1) {}
      child { node (10) {}
        child { node (100) {} }
        child[missing]
      }
      child { node (11) {}
        child[missing]
        child { node (111) {} }
      }
    };
  \begin{scope}[very thick,line cap=round]
    \draw (000.center) edge[bend left=30] (00.center);
    \draw (00.center) edge[bend left=30] (0.center);
    \draw (0.center) edge[bend right=30] (01.center);
    \draw (01.center) edge[bend left=20] (root.center);
    \draw (root.center) edge[bend left=10] (100.center);
    \draw (100.center) edge[bend right=30] (10.center);
    \draw (10.center) edge[bend right=30] (1.center);
    \draw (1.center) edge[bend left=30] (11.center);
    \draw (11.center) edge[bend left=30] (111.center);
  \end{scope}
  \draw[very thick] ($ (000.center) + (-4mm,0) $) -- (000.center);
  \draw[very thick,->] (111.center) -- ($ (111.center) + (4mm,0) $);
  \foreach\x in {000,00,01,0,100,10,111,11,1,root} {
    \draw[draw=black,fill=black] (\x.center) circle (.66mm);
  }
\end{tikzpicture}
\]

The following result is immediate from the definition of a binary search tree, but it is used frequently:

\begin{proposition}
  \label{prop:infixreading}
  For any binary search tree $T$, if a node $x$ is encountered before a node $y$ in an infix traversal, then $x \leq y$.
\end{proposition}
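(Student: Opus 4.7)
The plan is to proceed by induction on the number of nodes of $T$. The base case, in which $T$ has at most one node, is immediate: no two distinct nodes exist, and if $x = y$ then $x \leq y$ trivially. For the inductive step, let $r$ be the root of $T$, with label $\ell$, and let $T_L$ and $T_R$ denote the left and right subtrees of $r$. By the recursive definition of the infix traversal, the sequence of visited nodes for $T$ is: the infix-traversal sequence of $T_L$, followed by $r$, followed by the infix-traversal sequence of $T_R$.

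Hence, if $x$ is visited strictly before $y$, exactly one of the following four situations occurs: (i) both $x$ and $y$ lie in $T_L$; (ii) both lie in $T_R$; (iii) $x$ lies in $T_L$ and $y$ is either $r$ or a node of $T_R$; (iv) $x = r$ and $y$ lies in $T_R$. (If $x = y$, the conclusion is trivial.) Cases (i) and (ii) follow immediately from the inductive hypothesis applied to $T_L$ or $T_R$. For cases (iii) and (iv), I would invoke the defining property of a right strict binary search tree: every label in $T_L$ is $\leq \ell$, and every label in $T_R$ is $> \ell$. Combining at most two such inequalities through $\ell$ yields $x \leq y$ in every remaining case (and in fact strict inequality whenever $y \in T_R$, which is stronger than required).

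There is no real obstacle here; the proof is a routine unwinding of the definition of an infix traversal together with the BST ordering constraint. The only point worth flagging is that the BST is right strict, so the inequality is strict on the $T_R$ side and weak on the $T_L$ side, which is precisely why a weak inequality $x \leq y$ is the best one can state in general (consider $x \in T_L$ with label equal to $\ell = \text{label}(r) = y$).
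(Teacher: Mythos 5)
Your proof is correct; the paper itself offers no argument, simply asserting that the proposition is immediate from the definition of a binary search tree, and your induction on the number of nodes (splitting the infix sequence as $T_L$, root, $T_R$ and combining the weak inequality on the left subtree with the strict one on the right) is exactly the routine unwinding the authors have in mind.
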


In this paper, a \defterm{subtree} of a binary search tree will always be a rooted subtree.  Let $T$ be a binary search
tree and $x$ a node of $T$. The \defterm{complete subtree of $T$ at $x$} is the subtree consisting of $x$ and every node
below $x$ in $T$. The \defterm{path of left child nodes in $T$ from $x$} is the path that starts at $x$ and enters left
child nodes until a node with empty left subtree is encountered.

Let $B$ be a subtree of $T$. Then $B$ is said to be \defterm{on} the path of left child nodes from $x$ if the root of
$B$ is one of the nodes on this path. The \defterm{left-minimal} subtree of $B$ in $T$ is the complete subtree at the
left child of the left-most node in $B$; the \defterm{right-maximal} subtree of $B$ in $T$ is the complete subtree at
the right child of the right-most node in $B$.

In diagrams of binary search trees, individual nodes are shown as round, while subtrees as shown as triangles. An edge
emerging from the top of a triangle is the edge running from the root of that subtree to its parent node. A vertical
edge joining a node to its parent indicates that the node may be either a left or right child. An edge emerging from the
bottom-left of a triangle is the edge to that subtree's left-minimal subtree; an edge emerging from the bottom-right of
a triangle is the edge to that subtree's right-maximal subtree.

\section{Lower bound on diameters}
\label{sec:lowerbound}

Let $u \in \aA_n^*$ be a standard word. The \defterm{cocharge sequence} of $u$, denoted $\cochseq(u)$, is a sequence (of
length $n$) calculated from $u$ as follows:
\begin{enumerate}
\item Draw a circle, place a point $*$ somewhere on its circumference, and, starting from $*$, write $u$ anticlockwise
  around the circle.
\item Label the symbol $1$ with $0$.
\item Iteratively, after labelling some $i$ with $k$, proceed clockwise from $i$ to $i+1$. If the symbol $i+1$ is
  reached \emph{before} $*$, label $i+1$ by $k+1$. Otherwise, if the symbol $i+1$ is reached \emph{after} $*$, label
  $i+1$ by $k$.
\item The sequence whose $i$-th term is the label of $i$ is $\cochseq(u)$.
\end{enumerate}


\begin{wrapfigure}[9]{r}{30mm}
\begin{tikzpicture}
  \draw (0,0) circle[radius=5mm];
  %
  \draw[gray,thick,->] (-10:12.5mm) arc[radius=12.5mm,start angle=-10,end angle=-170];
  \draw[gray,decorate,decoration={text along path,text={Labelling},text color=gray,text align={align=center}}] (-170:16mm) arc[radius=16mm,start angle=-170,end angle=-10];
  \foreach\i/\ilabel in {0/*,9/1,8/2,7/4,6/6,5/3,4/7,3/5} {
    \node at ($ (90-\i*30:7mm) $) {$\ilabel$};
  };
  \foreach\i/\ilabel in {9/0,8/0,5/0,7/1,3/1,6/2,4/2} {
    \node[font=\footnotesize,gray] at ($ (90-\i*30:10mm) $) {$\ilabel$};
  };
\end{tikzpicture}
\end{wrapfigure}
For example, for the word $u = 1246375$, the labelling process is shown on the right, and it follows that
$\cochseq(u) = (0,0,0,1,1,2,2)$. Notice that the first term of a cocharge sequence is always $0$, and that each term in
the sequence is either the same as its predecessor or greater by $1$. Thus the $i$-th term in the sequence always lies
in the set $\set{0,1,\ldots,i-1}$.

The usual notion of `cocharge' is obtained by summing the cocharge sequence (see \cite[\S~5.6]{lothaire_algebraic}).

\begin{lemma}
  \label{lem:cochseqcycles}
  \begin{enumerate}
  \item Let $u \in \aA_n^*$ and $a \in \aA_n \setminus \set{1}$ be such that $ua$ is a standard word. Then $\cochseq(ua)$ is
    obtained from $\cochseq(au)$ by adding $1$ to the $a$-th component.
  \item Let $x,y \in \aA_n^*$ be such that $xy \in \aA_n^*$ is a standard word. Then corresponding components of $\cochseq(xy)$ and $\cochseq(yx)$
    differ by at most $1$.
  \end{enumerate}
\end{lemma}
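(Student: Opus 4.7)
The natural approach is to exploit the circular-diagram interpretation of $\cochseq$ just introduced, together with the observation that when two standard words are cyclic rotations of each other, the diagrams they yield share the same cyclic arrangement of letters on the circle and differ only in the position of the marker~$*$. Every difference in cochseq can therefore be read off from how $*$ moves.

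For part~(1), writing $ua$ anticlockwise from $*$ places the letter $a$ (the final letter of $ua$) just clockwise of~$*$, while writing $au$ places $a$ (as the first letter) just anticlockwise of~$*$. So the two diagrams differ by sliding $*$ across exactly one letter, namely~$a$. The cochseq increment $\mathrm{label}(i{+}1) - \mathrm{label}(i)$ equals $1$ exactly when the clockwise arc from $i$ to $i+1$ does not contain~$*$; sliding $*$ across $a$ can only toggle the $*$-membership of the two arcs having $a$ as an endpoint, namely the arc from $a-1$ to $a$ and the arc from $a$ to $a+1$. The hypothesis $a \neq 1$ ensures that $a-1$ exists and hence the first of these arcs is defined. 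A direct check shows that the two toggles occur in opposite directions, and when they are summed into the partial-sum formula for the labels they cancel on every index except $j = a$, where only the first toggle contributes. A single concrete example then pins down the sign, giving the claimed $\pm 1$ change in the $a$-th component and no change elsewhere.

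For part~(2), the same picture applies: $xy$ and $yx$ share the same cyclic arrangement on the circle and differ only in the position of~$*$. Sliding $*$ from the $xy$-position to the $yx$-position along either of the two available arcs passes $*$ across some set $T$ of letters, which may be taken to be either the letters of $x$ or the letters of~$y$. Standardness of $xy$ ensures these letters are all distinct. By part~(1), each $\ell \in T$ with $\ell \neq 1$ contributes a $\pm 1$ change (with a fixed sign determined by the direction of sliding) only to the $\ell$-th component. If $1 \in T$, sliding $*$ across $1$ is exceptional: the arc ``from $0$ to $1$'' does not exist, so only one toggle occurs, producing a uniform shift of $\mp 1$ on all coordinates from index $2$ onward. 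Summing these contributions, the net change in the $j$-th coordinate ($j \geq 2$) takes the form $\pm(\mathbf{1}[j \in T] - \mathbf{1}[1 \in T])$, which always lies in $\{-1, 0, 1\}$.

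The main obstacle will be the careful arc-membership bookkeeping required for the single-letter slide in part~(1): both endpoint arcs must be tracked, the boundary case $a = n$ (where the arc from $a$ to $a+1$ is absent) must be handled separately, and the sign of the net change pinned down. Once part~(1) is secured, part~(2) follows by the counting argument above, the exceptional role of the letter $1$ being the only extra subtlety.
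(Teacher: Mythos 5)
Your argument is correct and, for part (1), is essentially the paper's own proof recast in terms of partial sums of arc indicators; if anything your bookkeeping is tighter, since the compensating toggle of the arc from $a$ to $a+1$ is exactly what justifies the paper's terser claim that ``after this point, labelling proceeds in the same way'' (read literally, continuing from a label that is off by one would propagate the discrepancy; it is the second toggle that stops it). Your refusal to commit to a sign until checking an example is also vindicated: the paper's proof shows that $a$ is labelled $k$ in $\cochseq(ua)$ but $k+1$ in $\cochseq(au)$, so it is $\cochseq(au)$ that is obtained from $\cochseq(ua)$ by adding $1$ to the $a$-th component --- the opposite orientation to the one asserted in the statement (compare $\cochseq(123)=(0,0,0)$ with $\cochseq(312)=(0,0,1)$); only the ``differ by at most $1$'' consequence is used later, so this is harmless, but your example-check would have caught it. For part (2) you take a genuinely, if mildly, different route: the paper observes that one of $x,y$ omits the letter $1$, slides $*$ across that factor only, and applies part (1) once per letter, so the exceptional crossing of $1$ never arises; you instead permit $*$ to cross $1$, compute the resulting uniform shift on all coordinates of index at least $2$, and obtain the explicit net change $\pm(\mathbf{1}[j\in T]-\mathbf{1}[1\in T])$. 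Your version costs an extra case but yields the exact change rather than just the bound; if you want the shortest path, choose $T$ to be the letters of whichever factor avoids $1$ and the exceptional case disappears.
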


\begin{proof}
  Consider how $a$ is labelled during the calculation of $\cochseq(ua)$ and $\cochseq(au)$:
  \[
  \cochseq(ua):
  \begin{tikzpicture}[baseline=-1mm]
    \draw (0,0) circle[radius=4mm];
    \draw[gray,thick,->] (-10:8mm) arc[radius=8mm,start angle=-10,end angle=-170];
    %
    \foreach\i/\ilabel in {0/*,6/u,3/a} {
      \node at ($ (90-\i*30:6mm) $) {$\ilabel$};
    };
  \end{tikzpicture}
  \qquad
  \cochseq(au):
  \begin{tikzpicture}[baseline=-1mm]
    \draw (0,0) circle[radius=4mm];
    \draw[gray,thick,->] (-10:8mm) arc[radius=8mm,start angle=-10,end angle=-170];
    %
    \foreach\i/\ilabel in {0/*,9/a,6/u} {
      \node at ($ (90-\i*30:6mm) $) {$\ilabel$};
    };
  \end{tikzpicture}
  \]
  In the calculation of $\cochseq(ua)$, the symbol $a-1$ receives a label $k$, and then $a$ is reached \emph{after} $*$
  is passed; hence $a$ also receives the label $k$. In the calculation of $\cochseq(au)$, the symbols $1,\ldots,a-1$
  receive the same labels as they do in the calculation of $\cochseq(ua)$, but after labelling $a-1$ by $k$ the symbol
  $a$ is reached \emph{before} $*$ is passed; hence $a$ receives the label $k+1$; after this point, labelling proceeds
  in the same way. This proves part~1). For part~2), notice that one of $x$ and $y$ does not contain the symbol $1$; the
  result is now an immediate consequence of part~1).
\end{proof}

\begin{proposition}
  \label{prop:cochseqsylvester}
  Let $u,v \in \aA_n^*$ be standard words such that $u \sylvcong v$. Then $\cochseq(u) = \cochseq(v)$.
\end{proposition}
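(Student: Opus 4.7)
The plan is to reduce the statement to invariance of $\cochseq$ under a single elementary sylvester rewriting step, and then verify this via an explicit position-based formula for the cocharge sequence.

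Since $\sylvcong$ is the congruence on $\aA_n^*$ generated by the pairs $(cavb, acvb)$ with $a \leq b < c$ and $v \in \aA_n^*$, any two $\sylvcong$-equivalent words are linked by a finite chain of elementary rewrites of this form; each such rewrite preserves the multiset of letters and so keeps the word standard. Thus it is enough to show $\cochseq(xcavby) = \cochseq(xacvby)$ for every standard word $xcavby$ with $a \leq b < c$; and because every letter of a standard word appears exactly once, the hypothesis $a \leq b$ forces $a < b$, so I may assume $a < b < c$, and in particular $c \geq a+2$.

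I next extract from the circle construction the explicit description
\[
  \cochseq(u)_i \;=\; \#\bigl\{\,j \in \{1,\ldots,i-1\} : \mathrm{pos}_u(j+1) < \mathrm{pos}_u(j)\,\bigr\},
\]
where $\mathrm{pos}_u(k)$ denotes the position of the symbol $k$ in the standard word $u$. The reason is that going clockwise from the symbol $i$ on the circle one traverses the letters of $u$ in reverse order until meeting either $*$ or $i+1$; thus $i+1$ is reached before $*$ precisely when $i+1$ sits at a smaller position in $u$ than $i$ does. Alternatively, this formula can be obtained by iterating part~(1) of \fullref{Lemma}{lem:cochseqcycles}.

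Finally, I compare the two standard words using this formula. The letters $a$ and $c$ merely swap between the adjacent positions $P = |x|+1$ and $Q = P+1$, while every other symbol keeps its position. A descent count at a pair $(j, j+1)$ can therefore change only when $\{j, j+1\} \cap \{a, c\} \neq \emptyset$; using $c \geq a+2$ this yields at most the four mutually distinct pairs $(a-1,a), (a, a+1), (c-1, c), (c, c+1)$. In each such pair the other endpoint is a symbol distinct from both $a$ and $c$, so its position $p^*$ lies outside $\{P, Q\}$—including in the edge cases $a+1 = b$ or $c-1 = b$, where $p^* = |x|+|v|+3 > Q$. Because $P$ and $Q$ are consecutive integers, the truth value of ``$p^* < P$'' agrees with that of ``$p^* < Q$'' for every such $p^*$, and similarly for ``$p^* > P$'' and ``$p^* > Q$''. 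Hence each of the four potentially affected descents contributes identically in the two words, giving $\cochseq(xcavby) = \cochseq(xacvby)$. The main obstacle is the case analysis—checking that the four transitions are genuinely distinct and that the ``other endpoint'' really avoids the swap positions, even in the edge cases where it is $b$—but the fact that $P$ and $Q$ are consecutive makes every such check essentially immediate.
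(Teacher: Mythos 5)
Your proof is correct. Both you and the paper reduce the statement to checking invariance under a single application of a defining relation $(cavb,acvb)$ with $a<b<c$, but you verify that step differently: the paper argues informally about the trajectory of the circle labelling (among $a,b,c$ the process labels $a$ first, must pass $*$ to reach $b$, and only then reaches $c$, so exchanging $a$ and $c$ ``does not alter the resulting labelling''), whereas you first extract the closed formula $\cochseq(u)_i = \#\{\,j\in\{1,\dots,i-1\} : \mathrm{pos}_u(j+1)<\mathrm{pos}_u(j)\,\}$ from the circle construction and then check the four possibly affected descent pairs directly. Your route is more explicit and, I would say, more robust: it isolates exactly where the hypothesis that $b$ lies strictly between $a$ and $c$ is used (namely $c\ge a+2$, so no pair $(j,j+1)$ has both endpoints among the swapped letters --- otherwise the descent at $(a,a+1)$ would genuinely flip), and the observation that the swapped letters occupy \emph{consecutive} positions $P$ and $Q=P+1$ makes every comparison with an outside position manifestly invariant. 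The paper's version is shorter but leaves the reader to unpack why the interchange is harmless; your descent-formula computation makes that step airtight. (The side remark that the formula also follows by iterating part~(1) of the cyclic-shift lemma is not needed and is the least convincing sentence of your write-up; the direct derivation from the circle construction that you give first is the one to keep.)
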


\begin{proof}
  It suffices to prove the result when $w$ and $w'$ differ by a single application of a defining relation
  $(cavb,acvb) \in \drel{R}_\sylv$ where $a \leq b < c$. In this case, $w = pcavbq$ and $w' = pacvbq$, where $p,q,v \in \aA_n^*$ and
  $a,b,c \in \aA_n$ with $a \leq b < c$. Since $w$ and $w'$ are standard words, $a < b$.

  Consider how labels are assigned to the symbols $a$, $b$, and $c$ when calculating the cocharge sequence of $w$:
  \[
    \cochseq(w):
    \begin{tikzpicture}[baseline=-1mm]
      \draw (0,0) circle[radius=4mm];
      \draw[gray,thick,->] (-10:8mm) arc[radius=8mm,start angle=-10,end angle=-170];
      %
      \foreach\i/\ilabel in {0/*,8/c,7/a,4/b} {
        \node at ($ (90-\i*30:6mm) $) {$\ilabel$};
      };
    \end{tikzpicture}
  \]
  Among these three symbols, $a$ will receive a label first, then $b$, then $c$. Thus, after $a$, the labelling process
  will pass $*$ at least once to visit $b$ and only then visit $c$. Thus if we interchange $a$ and $c$, we do not alter
  the resulting labelling. Hence $\cochseq(w) = \cochseq(w')$.
\end{proof}

For any standard binary tree $T$ in $\sylv_n$, define $\cochseq(T)$ to be $\cochseq(u)$ for any standard word
$u \in \aA_n^*$ such that $T = \psylv{u}$. By \fullref{Proposition}{prop:cochseqsylvester}, $\cochseq(T)$ is well-defined.

\begin{proposition}
  \label{prop:sylvlowerbound}
  The connected component of $K(\sylv_n)$ consisting of standard elements has diameter at least $n-1$.
\end{proposition}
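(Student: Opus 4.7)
The plan is to use the cocharge sequence as a pseudo-metric witnessing a lower bound: by \fullref{Lemma}{lem:cochseqcycles}\textup{(2)}, a single edge of $K(\sylv_n)$ changes each component of $\cochseq$ by at most $1$, so any two standard elements whose cocharge sequences differ by $n-1$ in some coordinate must be at graph-distance at least $n-1$ in $K(\sylv_n)$.

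For the witnesses I would take $s_1 = [12\cdots n]_{\sylvcong}$ and $s_2 = [n(n-1)\cdots 21]_{\sylvcong}$. Both are standard and share the evaluation $(1,1,\ldots,1)$, so both lie in the connected component of standard elements. A direct application of the cocharge labelling procedure gives $\cochseq(12\cdots n) = (0,0,\ldots,0)$, because starting at any $i$ and moving clockwise one must cross $*$ before reaching $i+1$; and $\cochseq(n(n-1)\cdots 21) = (0,1,2,\ldots,n-1)$, because $i+1$ always sits immediately clockwise of $i$ with no crossing involved. By \fullref{Proposition}{prop:cochseqsylvester}, these sequences depend only on the monoid classes $s_1$ and $s_2$, and their $n$-th coordinates differ by $n-1$.

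To finish, consider any path $s_1 = r_0 \sim r_1 \sim \cdots \sim r_d = s_2$ in $K(\sylv_n)$. Each $r_j$ has the same evaluation as $s_1$, hence is standard. Each edge arises from a factorisation $r_{j-1} = [uv]_{\sylvcong}$, $r_j = [vu]_{\sylvcong}$ with $u,v \in \aA_n^*$, and since $r_{j-1}$ is standard the word $uv$ is a standard word; therefore \fullref{Lemma}{lem:cochseqcycles}\textup{(2)} bounds the difference of corresponding components of $\cochseq(r_{j-1})$ and $\cochseq(r_j)$ by $1$. Telescoping along the path gives a coordinate-wise bound of $d$ between $\cochseq(s_1)$ and $\cochseq(s_2)$, and comparing $n$-th components forces $d \geq n-1$. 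No step is really an obstacle; the only mild subtlety is passing from the monoid-level definition of cyclic shift to the word-level hypothesis of \fullref{Lemma}{lem:cochseqcycles}, but this is automatic because standardness of a word depends only on the underlying multiset of symbols.
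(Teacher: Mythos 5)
Your proposal is correct and follows essentially the same route as the paper: the same witnesses $12\cdots n$ and $n(n-1)\cdots 21$, the same computation of their cocharge sequences as $(0,\ldots,0)$ and $(0,1,\ldots,n-1)$, and the same telescoping of Lemma~\ref{lem:cochseqcycles}(2) along a path to force length at least $n-1$. Your explicit remark that every element on the path is standard (by multihomogeneity), so that the lemma's hypothesis applies at each edge, is a point the paper leaves implicit.
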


\begin{proof}
  Let $t = 12\cdots (n-1)n$ and $u = n(n-1)\cdots 21$, and let
  \[
    T = \psylv{t}=
    \begin{tikzpicture}[tinybst,baseline=-10.5mm]
      \node {$n$}
      child { node {$n-1$}
        child[level distance=8mm,sibling distance=12.8mm,dotted] { node[solid] {$2$}
          child[level distance=5mm,sibling distance=8mm,solid] { node {$1$} }
          child[missing]
        }
        child[missing]
      }
      child[missing];
    \end{tikzpicture}
    \qquad\text{ and }U = \psylv{u} =
    \begin{tikzpicture}[tinybst,baseline=-10.5mm]
      \node {$1$}
      child[missing]
      child { node {$2$}
        child[missing]
        child[level distance=8mm,sibling distance=12.8mm,dotted] { node[solid] {$n-1$}
          child[missing]
          child[level distance=5mm,sibling distance=8mm,solid] { node {$n$} }
        }
      };
    \end{tikzpicture}
  \]
  Since $T$ and $U$ have the same evaluation, they are $\cyc^*$-related by \cite[\S~3]{cm_conjugacy}, and so in the same
  connected component of $K(\sylv_n)$.  Let $T = T_0,T_1,\ldots,T_{m-1},T_m = U$ be a path in $K(\sylv_n)$ from $T$ to
  $U$. Then for $i = 0,\ldots,m-1$, we have $T_i \cyc T_{i+1}$. That is, there are words $u_i,v_i \in \aA_n^*$ such that
  $T_i = \psylv{u_iv_i}$ and $T_{i+1} = \psylv{v_iu_i}$. By \fullref[(2)]{Lemma}{lem:cochseqcycles}, $\cochseq(T_i)$ and
  $\cochseq(T_{i+1})$ differ by adding $1$ or subtracting $1$ from certain components. Hence corresponding components of
  $\cochseq(T)$ and $\cochseq(U)$ differ by at most $m$. Since $\cochseq(T) = (0,0,\ldots,0,0)$ and
  $\cochseq(U) = (0,1,\ldots,n-2,n-1)$, it follows that $m \geq n-1$. Hence $T$ and $U$ are a distance at least $n-1$
  apart in $K(\sylv_n)$.
\end{proof}

\section{Upper bounds on diameters}
\label{sec:upperbound}

\begin{proposition}
  \label{proposition:sylvupperbound}
  Any two standard elements of $\sylv_n$ are a distance at most $n$ apart in $K(\sylv_n)$.
\end{proposition}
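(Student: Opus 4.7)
My plan is to prove the bound by an iterative construction that matches the trees $T$ and $T'$ symbol-by-symbol in decreasing order, using one cyclic shift per symbol. Starting from $T$, for each $i$ from $n$ down to $2$, I would maintain a working tree $S$ that agrees with $T'$ on the locations of every node labelled from $\set{i+1,\ldots,n}$, then apply a single cyclic shift to produce a new working tree that additionally agrees with $T'$ on the node labelled $i$. After $n-1$ such shifts the working tree agrees with $T'$ on $\set{2,\ldots,n}$, and since the position of $1$ in any standard BST is determined by the positions of the other symbols, the working tree coincides with $T'$. The extra unit in the stated bound $n$ (versus the conjectured $n-1$) would presumably be absorbed by a single preparatory shift required to set up the match at $i=n$ in a corner case.

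An alternative framing is induction on $n$: given standard $T,T'\in\sylv_n$, first bring the symbol $n$ to the root of each tree via one cyclic shift---exploiting the fact that $n$ is always at the end of the rightmost spine, and hence admits a reading of the form $p \cdot n \cdot q$ where $q$ encodes the (reversed) nodes of the rightmost spine above $n$---after which the left subtrees are standard BSTs in $\sylv_{n-1}$ and the inductive hypothesis applies. Naively this yields $2 + (n-1) = n+1$, so a case analysis is needed to save one shift: for instance, by treating the situation where $n$ is already at the root of one of the trees, or by absorbing the preparatory shift into the first step of the inductively-obtained path. Lifting a cyclic shift from $\sylv_{n-1}$ to $\sylv_n$ also requires care, since naively appending $n$ to all intermediate words does not preserve cyclic-shift compatibility under arbitrary splits; but when $n$ is fixed at the root, one can restrict attention to readings of the form $\alpha \cdot n$ in which $n$ remains the final letter, so that the cyclic shifts lift straightforwardly.

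The main obstacle, in either approach, is to verify that the required cyclic shift actually exists and achieves precisely the intended structural effect: given the working tree $S$, one must exhibit a split $uv$ of some reading of $S$ so that $\psylv{vu}$ differs from $\psylv{uv}$ in exactly the intended way, moving a single target node (or planting $n$ at the root) without disturbing the already-matched nodes. This is delicate because cyclic shifts can have long-range effects on the BST structure, and requires a careful analysis of the rightmost and leftmost spines, of the interaction of the sylvester insertion algorithm with rotated words, and of case distinctions based on where the target symbol currently sits. The cocharge sequence introduced in \fullref{Section}{sec:lowerbound} may serve as a useful progress measure to confirm that each shift strictly advances the construction. I expect these structural verifications and case analyses, rather than the high-level strategy, to form the technical heart of the proof.
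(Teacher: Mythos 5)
Your high-level shape---an iterative construction using one cyclic shift per step, $n$ steps in total---matches the paper's, but the content of the proof is exactly the part you defer, and the specific invariants you propose do not hold up. The paper reduces the proposition to a lemma that builds the target tree $U$ by following its \emph{postfix traversal} $u_1,\ldots,u_n$ (not the symbols in decreasing order of label), and maintains two invariants on the working tree $T_h$: the completed subtree of $U$ rooted at the most recently visited node $u_h$ sits at the root of $T_h$, and all completed maximal subtrees of $U$ sit, in order, on the path of left child nodes from the root of $T_h$. The induction step is a four-case analysis (according to how $u_{h+1}$ sits relative to $u_h$ in $U$), in each case writing down explicitly a factorisation $uv$ of a reading of $T_h$ so that $\psylv{vu}$ restores the invariants. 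That case analysis is the substance of the proof; observing that ``one must exhibit a split $uv$ \ldots{} this is delicate'' does not yet engage with it.

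Beyond the missing existence argument, both of your concrete strategies have identifiable problems. For the decreasing-label approach, the set of nodes with labels in $\set{i+1,\ldots,n}$ need not be connected in a binary search tree (in the standard tree with root $3$, left child $1$, and $2$ the right child of $1$, the nodes $2$ and $3$ are separated by $1$), so ``agrees with $T'$ on the locations of these nodes'' is not a well-defined invariant without substantial reworking, and it is not the invariant that makes the induction close. For the induction-on-$n$ approach, the proposed lifting fails outright: the root of $\psylv{w}$ is always the \emph{last} letter of $w$ (it is inserted first and the root never changes), so if a reading $uv$ ends in $n$ then $\psylv{vu}$ has root equal to the last letter of $u$, which is $n$ only when $v$ is empty. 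Hence no nontrivial cyclic shift preserves the property that $n$ is at the root, and cyclic shifts performed inside the left subtree do not lift in the way you describe. Finally, the cocharge sequence is a $\sylvcong$-invariant used in the paper only for the lower bound; since a single cyclic shift can move many of its components simultaneously, it does not serve as a progress measure for the upper bound.
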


\begin{proof}
  Since $\sylv_m$ embeds into $\sylv_n$ for all $m \leq n$, and since $K(\sylv_m)$ is the subgraph of $K(\sylv_n)$
  induced by $\sylv_m$, this result follows from \fullref{Lemma}{lem:sylvupperbound} below.
\end{proof}

\fullref{Lemma}{lem:sylvupperbound} proves that in $K(\sylv_n)$ there is a path of length at most $n$ between two
standard elements with the same number of nodes. First, however, the strategy used to construct such a path is
illustrated in the following example.

\begin{example}
  \label{eg:sylvupperbound}
  Let
  \[
    T =
    \begin{tikzpicture}[baseline=(0.base)]
      \begin{scope}[tinybst]
        \node (root) {$4$}
        child { node (0) {$2$}
          child { node (00) {$1$} }
          child { node (01) {$3$} }
        }
        child { node (00) {$5$} };
      \end{scope}
    \end{tikzpicture},\quad
    U =
    \begin{tikzpicture}[baseline=(1.base)]
      \begin{scope}[tinybst]
        \node (root) {$1$}
        child[missing]
        child { node (1) {$4$}
          child { node (10) {$3$}
            child { node (100) {$2$} }
            child[missing]
          }
          child { node (11) {$5$} }
        };
      \end{scope}
    \end{tikzpicture}\;\; \in \sylv_5
  \]
  The aim is to build a sequence $T = T_0 \cyc T_1 \cyc T_2 \cyc T_3 \cyc T_4 \cyc T_5 = U$. Consider the postfix
  traversal of $U$. The $5$ steps in this traversal are shown below on the right, together with the relevant cases in
  the proof of \fullref{Lemma}{lem:sylvupperbound}. The parts of $U$ that have been visited already at each step are
  outlined. The idea is that the $h$-th cyclic shift leads to a tree $T_h$ where copies of the outlined parts of $U$
  appear on the path of left child nodes from the root of $T_h$. Note that cyclic shifts never break up the subwords
  (outlined) that represent the already-built subtrees. (The difficulty in the general proof is showing that a suitable
  cyclic shift exists at each step.)
\begin{align*}
T = T_0 ={}& \psylv{13254} \cyc{} \psylv{54132}\\
=T_1 ={}&
  \begin{tikzpicture}[baseline=(0.base)]
    \begin{scope}[tinybst]
      \node (root) {$2$}
      child { node (1) {$1$} }
      child { node (1) {$3$}
        child[missing]
        child { node (11) {$4$}
          child[missing]
          child { node (110) {$5$} }
        }
      };
      \draw[bstoutline] (root) circle[radius=3mm];
    \end{scope}
  \end{tikzpicture}
  &
    \begin{tikzpicture}[baseline=(1.base)]
      \begin{scope}[tinybst]
        \node (root) {$1$}
        child[missing]
        child { node (1) {$4$}
          child { node (10) {$3$}
            child { node[fill=gray] (100) {$2$} }
            child[missing]
          }
          child { node (11) {$5$} }
        };
      \end{scope}
      \draw[bstoutline] (100) circle[radius=3mm];
    \end{tikzpicture}
        &
          \quad\text{Base of induction}
  \\
  ={}& \psylv{5431\olsubword{2}}  \cyc{} \psylv{1\olsubword{2}543}
  \displaybreak[0]\\
=T_2 ={}&
  \begin{tikzpicture}[baseline=(0.base)]
    \begin{scope}[tinybst]
      \node (root) {$3$}
      child { node (0) {$2$}
        child { node (00) {$1$} }
        child[missing]
      }
      child { node (1) {$4$}
        child[missing]
        child { node (11) {$5$} }
      };
    \end{scope}
    \draw[bstoutline] \convexpath{root,0}{3mm};
  \end{tikzpicture}
  &
    \begin{tikzpicture}[baseline=(1.base)]
      \begin{scope}[tinybst]
        \node (root) {$1$}
        child[missing]
        child { node (1) {$4$}
          child { node[fill=gray] (10) {$3$}
            child { node (100) {$2$} }
            child[missing]
          }
          child { node (11) {$5$} }
        };
      \end{scope}
      \draw[bstoutline] \convexpath{10,100}{3mm};
    \end{tikzpicture}
        &
          \quad\text{Induction step, case 3}
  \\
  ={}& \psylv{541\olsubword{23}}   \cyc{} \psylv{41\olsubword{23}5}
  \displaybreak[0]\\
=T_3 ={}&
  \begin{tikzpicture}[baseline=(0.base)]
    \begin{scope}[tinybst]
      \node (root) {$5$}
      child { node (0) {$3$}
        child { node (00) {$2$}
          child { node (000) {$1$} }
          child[missing]
        }
        child { node (01) {$4$} }
      }
      child[missing];
    \end{scope}
    \draw[bstoutline] (root) circle[radius=3mm];
    \draw[bstoutline] \convexpath{0,00}{3mm};
  \end{tikzpicture}
  &
    \begin{tikzpicture}[baseline=(1.base)]
      \begin{scope}[tinybst]
        \node (root) {$1$}
        child[missing]
        child { node (1) {$4$}
          child { node (10) {$3$}
            child { node (100) {$2$} }
            child[missing]
          }
          child { node[fill=gray] (11) {$5$} }
        };
      \end{scope}
    \draw[bstoutline] (11) circle[radius=3mm];
    \draw[bstoutline] \convexpath{10,100}{3mm};
    \end{tikzpicture}
        &
          \quad\text{Induction step, case 1}
  \\
  ={}& \psylv{41\olsubword{23}\olsubword{5}}  \cyc{} \psylv{1\olsubword{23}\olsubword{5}4}
  \displaybreak[0]\\
=T_4 ={}&
  \begin{tikzpicture}[baseline=(0.base)]
    \begin{scope}[tinybst]
      \node (root) {$4$}
      child { node (0) {$3$}
        child { node (00) {$2$}
          child { node (000) {$1$} }
          child[missing]
        }
        child[missing]
      }
      child { node (1) {$5$} };
    \end{scope}
    \draw[bstoutline] \convexpath{root,1,00}{3mm};
  \end{tikzpicture}
  &
    \begin{tikzpicture}[baseline=(1.base)]
      \begin{scope}[tinybst]
        \node (root) {$1$}
        child[missing]
        child { node[fill=gray] (1) {$4$}
          child { node (10) {$3$}
            child { node (100) {$2$} }
            child[missing]
          }
          child { node (11) {$5$} }
        };
      \end{scope}
      \draw[bstoutline] \convexpath{1,11,100}{3mm};
    \end{tikzpicture}
        &
          \quad\text{Induction step, case 2}
  \\
  ={}& \psylv{1\olsubword{2354}}   \cyc{} \psylv{\olsubword{2354}1}
  \displaybreak[0]\\
=T_5 ={}&
    \begin{tikzpicture}[baseline=(1.base)]
      \begin{scope}[tinybst]
        \node (root) {$1$}
        child[missing]
        child { node (1) {$4$}
          child { node (10) {$3$}
            child { node (100) {$2$} }
            child[missing]
          }
          child { node (11) {$5$} }
        };
      \end{scope}
      \draw[bstoutline] \convexpath{root,11,100}{3mm};
    \end{tikzpicture} = U
  &
    \begin{tikzpicture}[baseline=(1.base)]
      \begin{scope}[tinybst]
        \node[fill=gray] (root) {$1$}
        child[missing]
        child { node (1) {$4$}
          child { node (10) {$3$}
            child { node (100) {$2$} }
            child[missing]
          }
          child { node (11) {$5$} }
        };
      \end{scope}
      \draw[bstoutline] \convexpath{root,11,100}{3mm};
    \end{tikzpicture}
        &
          \quad\text{Induction step, case 4}
\end{align*}
\end{example}

\begin{lemma}
  \label{lem:sylvupperbound}
  Let $T, U \in \sylv_n$ be standard and have $n$ nodes. Then there is a sequence $T = T_0, T_1, \ldots, T_n = U$ with
  $T_h \cyc T_{h+1}$ for $h = 0,\ldots,n-1$.
\end{lemma}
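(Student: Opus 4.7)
The plan is to proceed by induction on $h$, building $T_h$ so that the first $h$ nodes $v_1,\ldots,v_h$ of $U$ in postfix-traversal order have been ``absorbed'' onto the path of left child nodes from the root of $T_h$, realising their partial tree-structure as seen in $U$. Since a postfix traversal visits all descendants before a node, the set $V_h = \set{v_1,\ldots,v_h}$ is downward-closed in $U$, so the restriction of $U$ to $V_h$ is a forest $F_h$ whose components are entire (rooted) subtrees of $U$. The inductive invariant I would maintain is that $T_h$ admits a reading of the form $w_h = q_h\cdot s_1\cdots s_m$, where the $s_j$ are readings of the components of $F_h$ listed in reverse postfix-absorption order, and these components appear in $T_h$ as segments of the path of left child nodes from the root, with $s_m$'s component nearest the root and $s_1$'s component furthest down. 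When $h=n$, $V_n$ exhausts the nodes of $U$, $F_n = U$, and the invariant forces $T_n = U$ as required.

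For the base step $h=0\to h=1$, $v_1$ is the leftmost-deepest (hence minimum-label) node of $U$; one picks a reading of $T=T_0$ and a split point so that the cyclic shift produces a word ending in $v_1$, making $v_1$ the root of $T_1$ with the single unit $s_1 = v_1$. For the inductive step $h\to h+1$, the analysis splits into four cases according to how the children of $v_{h+1}$ in $U$ lie relative to $V_h$. \textit{Case 1}: $v_{h+1}$ is a leaf of $U$, and a new unit $s_{m+1}=v_{h+1}$ is placed above all existing units on the left spine, achieved by shifting an appropriate occurrence of $v_{h+1}$ from $q_h$ to the end of the word. \textit{Case 2}: both children of $v_{h+1}$ in $U$ lie in $V_h$, so $s_{m-1}$ and $s_m$ are readings of its left and right $U$-subtrees respectively, and these are fused into one unit under the new root $v_{h+1}$. \textit{Case 3}: only the left $U$-child of $v_{h+1}$ lies in $V_h$, so $v_{h+1}$ is promoted above $s_m$'s subtree, extending that unit. \textit{Case 4}: only the right $U$-child of $v_{h+1}$ lies in $V_h$ (including the terminal case where $v_{h+1}$ is the root of $U$), so $v_{h+1}$ is promoted and $s_m$'s subtree becomes its right child. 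In every case the shift takes the form $w_h = xy \cyc yx$ with $x,y$ chosen so that no unit $s_j$ is split across the cut, and the image word is re-read in the form required by the invariant for $h+1$.

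The main difficulty is, for each of the four cases, to identify the correct reading of $T_h$ and the correct split point realising a cyclic shift whose image is a tree of the prescribed shape. The verification will lean on \fullref{Algorithm}{alg:sylvinsertone} together with the characterisation of readings of a BST (each child appears before its parent), while \fullref{Proposition}{prop:infixreading} is used repeatedly to control the order of labels along the left spine when re-reading the shifted tree. The most delicate point is to ensure in Cases~2 and~3 that the left-minimal subtree hanging below the bottom unit $s_1$ in $T_h$ does not obstruct the splicing of $v_{h+1}$ into the spine; this is where the right-strict nature of the BSTs (so that strict inequality governs right descents) plays a crucial role, and where the general proof in the full paper is substantially longer than the standard-element case treated here.
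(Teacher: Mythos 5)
Your plan is essentially the paper's proof: induct along the left-to-right postfix traversal of $U$, maintain the invariant that the complete subtrees of $U$ rooted at the already-visited maximal nodes appear, in order, on the path of left child nodes from the root of $T_h$ with the newest one at the root, and split the inductive step into four cases; your division according to which children of $v_{h+1}$ lie in $V_h$ is equivalent to the paper's division according to the relative positions of $u_h$ and $u_{h+1}$ in $U$, and your descriptions of what happens to the units in each case match the paper's Figures.

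There are, however, two concrete problems. First, the invariant in the form $w_h = q_h\, s_1\cdots s_m$ is not maintainable: a node lying on the left spine of $T_h$ strictly between two units is an ancestor of every node of the lower unit and a descendant of nodes of the upper unit, so in any reading it must be interleaved \emph{between} the corresponding $s_{j-1}$ and $s_j$ rather than pushed into a prefix $q_h$. Such intermediate spine nodes genuinely arise in the construction (in the paper's Case 1 the re-inserted word $\delta\lambda B_h\alpha\beta u_{h+1}$ deposits part of $\alpha$ on the spine between the new root unit and $B_h$, and in Case 2 the reading used is $\lambda B_g u_{h+1}\delta B_h$ with $u_{h+1}$ wedged between two units). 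The purely structural half of your invariant --- the paper's conditions P1 and P2 --- is the correct one; the claim about the shape of the reading should be dropped. Second, the part you explicitly defer is where essentially all of the work lies: in each case one must use \fullref{Proposition}{prop:infixreading} to pin down where $u_{h+1}$ can possibly sit inside $T_h$ (in Case 1 it must lie in the right-maximal subtree of the root unit; in Cases 2 and 4 it is confined to one of exactly two positions, forcing two sub-cases each), and then verify via \fullref{Algorithm}{alg:sylvinsertone} that the cyclically shifted word rebuilds the spine in the required order. Without this, the existence of a suitable reading and cut at each step is asserted rather than proved. A minor point: $v_1$ is the first leaf reached by the postfix traversal, which need not be the minimum-label node of $U$ (in \fullref{Example}{eg:sylvupperbound} it is the node labelled $2$, not the root labelled $1$); your base step does not actually use this claim, but the parenthetical is false as stated.
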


\begin{proof}
  This proof is only concerned with standard BSTs; thus for brevity nodes are identified with their labels. Notice that
  each of $T$ and $U$ has exactly one node labelled by each symbol in $\aA_n$.

  Consider the left-to-right postfix traversal of $U$; there are exactly $n$ steps in this traversal. Let $u_h$ be the
  node visited at the $h$-th step of this traversal.

  For $h = 1,\ldots,n$, let $U_h = \set{u_1,\ldots,u_h}$ and let $U_h^\top$ be the set of nodes in $U_h$ that do not
  lie below any other node in $U_h$. Since a later step in a postfix traversal is never below an earlier one, it follows
  that $u_h \in U_h^\top$ for all $h$. Let $B_h$ be the subtree of $U$ consisting of $u_h$ and every node that is
  below $u_h$.

  The aim is to construct inductively the required sequence. Let $h = 1,\ldots,n$ and suppose
  $U_h^\top = \set{u_{i_1},\ldots,u_{i_k}}$ (where $i_1 < \ldots < i_k = h$). Then the tree $T_h$ will satisfy the
  following conditions:
  \begin{itemize}
  \item[P1] The subtree $B_{i_k}$ appears at the root of $T_h$.
  \item[P2] The subtrees $B_{i_k},\ldots,B_{i_1}$ appear, in that order (but not necessarily consecutively), on the path
    of left child nodes from the root of $T_h$.
  \end{itemize}
  (Note that conditions P1 and P2 do not apply to $T_0$.)

  \medskip
  \noindent\textit{Base of induction.} Set $T_0 = T$. Take any reading of $T_0$ and factor it as $wu_{1}w'$. Let $T_1 = \psylv{w'wu_{1}}$. Clearly $T_1$ has
  root node $u_{1}$. Since $B_1$ consists only of the node $u_1$ (since $u_1$ is the first node in $U$ visited by the
  postfix traversal and is thus a leaf node), $T_1$ satisfies P1. Further, $T_1$ trivially satisfies P2. Finally, note
  that $T_0 \cyc T_1$. (For an illustration, see the definition of $T_1$ in \fullref{Example}{eg:sylvupperbound}.)

  \medskip
  \noindent\textit{Induction step.}  The remainder of the sequence of trees is built inductively. Suppose that the
  tree $T_h$ satisfies P1 and P2; the aim is to find $T_{h+1}$ satisfying P1 and P2 with $T_h \cyc T_{h+1}$. There are
  four cases, depending on the relative positions of $u_h$ and $u_{h+1}$ in $U$:
  \begin{enumerate}
  \item $u_h$ is a left child node and $u_{h+1}$ is in the right subtree of the parent of $u_h$;
  \item $u_h$ is the right child of $u_{h+1}$, and $u_{h+1}$ has non-empty left subtree;
  \item $u_h$ is the left child of $u_{h+1}$ (which implies, by the definition of the postfix traversal, that $u_{h+1}$
    has empty right subtree);
  \item $u_h$ is the right child of $u_{h+1}$, and $u_{h+1}$ has empty left subtree.
  \end{enumerate}

  \smallskip
  \noindent\textit{Case 1.} Suppose that, in $U$, the node $u_h$ is a left child node and $u_{h+1}$ is in the right subtree of the parent of
  $u_h$. (For an illustration of this case, see the step from $T_2$ to $T_3$ in \fullref{Example}{eg:sylvupperbound}.) Then $B_{h+1}$
  consists only of the node $u_{h+1}$, since by the definition of a postfix traversal $u_{h+1}$ is a leaf
  node. Furthermore, $U_{h+1}^\top = U_h^\top \cup \set{u_{h+1}}$.

  \begin{figure}[t]
    \centering
    \begin{tikzpicture}
      \begin{scope}[tinybst]
        \node[triangle] (aroot) at (0mm,-35mm) {$B_{\mathrlap{h}\,}$}
        child { node[triangle] (a0) {$\lambda$} }
        child { node[triangle] (a1) {$\delta$}
          child { node (a10) {$u_{i+1}$}
            child { node[triangle] (a100) {$\alpha$} }
            child { node[triangle] (a101) {$\beta$} }
          }
        };
      \end{scope}
      \begin{scope}[tinybst]
        \node (broot) at ($ (aroot) + (60mm,0mm) $) {$u_{i+1}$}
        child { node[triangle] (b0) {}
          child { node[triangle] (b00) {$B_{\mathrlap{h}\,}$}
            child { node[triangle] (b000) {$\lambda$} }
            child { node[triangle] (b001) {} }
          }
          child[missing]
        }
        child { node[triangle] (b1) {} };
      \end{scope}
      \node[draw,bstoutline,rounded rectangle,font=\footnotesize,minimum height=5.5mm,minimum width=10mm] (brootoutline) at (broot) {};
      \node (bhplus1) at ($ (broot) + (-12mm,0) $) {$B_{h+1}$};
      \draw[bstoutline] (bhplus1) -- (brootoutline);
      \node[anchor=east] at ($ (aroot) + (-15mm,0) $) {$T_h=$};
      \node[anchor=west] at ($ (broot) + (15mm,0) $) {$= T_{h+1}$};
      \draw ($ (aroot) + (20mm,0) $) edge[mogrifyarrow] node[fill=white,anchor=mid,inner xsep=.25mm,inner ysep=1mm] {$\cyc$} ($ (broot) + (-20mm,0) $);
    \end{tikzpicture}
    \caption{Induction step, case~1.}
    \label{fig:case1}
  \end{figure}
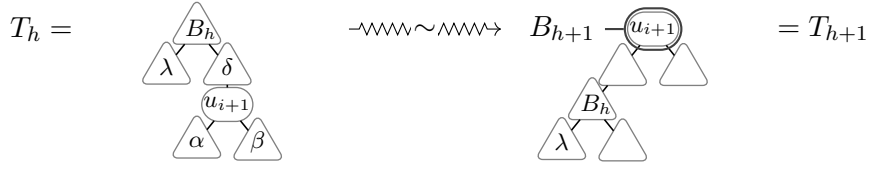%
  By P1, $B_h$ appears at the root of $T_h$. By \fullref{Proposition}{prop:infixreading} applied to $U$, the symbol $u_{h+1}$ is greater than
  every node of $B_h$, so $u_{h+1}$ must be in the right-maximal subtree of $B_h$ in $T_h$.

  As shown in \fullref{Figure}{fig:case1}, let $\lambda$ be a reading of the left-minimal subtree of $B_h$. Let $\delta$
  be a reading of the right-maximal subtree of $B_h$ outside of the complete subtree at $u_{h+1}$. Let $\alpha$ and
  $\beta$ be readings of the left and right subtrees of $u_{h+1}$, respectively. Note that the subtrees $B_i$ for
  $u_i \in U_h^\top$ are contained in $\lambda$.

  Thus $T_h = \psylv{\alpha\beta u_{h+1}\delta\lambda B_h}$. Let
  $T_{h+1} = \psylv{\delta\lambda B_h\alpha\beta u_{h+1}}$; note that $T_h \cyc T_{h+1}$.

  In computing $T_{h+1}$, the symbol $u_{h+1}$ is inserted first and becomes the root node. Since $B_{h+1}$ consists
  only of the node $u_{h+1}$, the tree $T_{h+1}$ satisfies P1. Since every symbol in $B_h$ and $\lambda$ is strictly
  less that every symbol in $\alpha$, $\beta$, or $\delta$, the trees $B_h$ and $\lambda$ are re-inserted on the path of
  left child nodes from the root of $T_{h+1}$. Thus all the subtrees $B_i$ for $u_i \in U_{h+1}^\top$ are on the
  path of left child nodes from the root, and so $T_{h+1}$ satisfies P2.

  \smallskip
  \noindent\textit{Case 2.} Suppose that in $U$, the node $u_h$ is the right child of $u_{h+1}$, and $u_{h+1}$ has non-empty left subtree. (For an illustration of this case, see the step from $T_3$ to $T_4$ in \fullref{Example}{eg:sylvupperbound}.)
  Let $u_g$ be the left child of $u_{h+1}$. Thus $B_{h+1}$ consists of $u_{h+1}$ with left subtree $B_g$ and right
  subtree $B_h$.  By the definition of the postfix traversal, $u_g$ was visited before the $h$-th step, but no node
  above $u_g$ has been visited. That is, $u_g \in U_h^\top$. Hence
  $U_{h+1}^\top = \parens[\big]{U_h^\top \setminus \set{u_g,u_h}} \cup \set{u_{h+1}}$.

  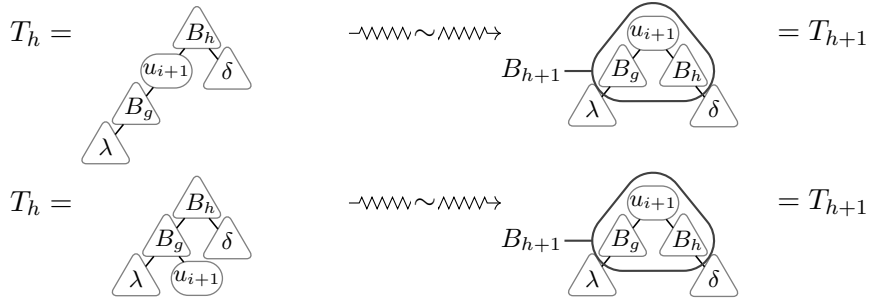
\begin{figure}[t]
    \centering
    \begin{tikzpicture}
        \begin{scope}[tinybst]
        \node[triangle] (aroot) at (0,-32.5mm) {$B_{\mathrlap{h}\,}$}
        child { node (a0) {$u_{i+1}$}
          child { node[triangle] (a00) {$B_{\mathrlap{g}\,}$}
            child { node[triangle] (a000) {$\lambda$} }
            child[missing]
          }
          child[missing]
        }
        child { node[triangle] (a1) {$\delta$} };
      \end{scope}
      \begin{scope}[tinybst]
        \node (broot) at ($ (aroot) + (60mm,0) $) {$u_{i+1}$}
        child { node[triangle] (b0) {$B_{\mathrlap{g}\,}$}
          child { node[triangle] (b00) {$\lambda$} }
          child[missing]
        }
        child { node[triangle] (b1) {$B_{\mathrlap{h}\,}$}
          child[missing]
          child { node[triangle] (b11) {$\delta$} }
        };
      \end{scope}
      \node[font=\small,inner sep=.25mm] (bbi0) at ($ (b0) + (-12mm,0mm) $) {$B_{h+1}$};
      \draw[bstoutline] \convexpath{b1,b0,broot}{4mm};
      \draw[bstoutline] (bbi0) -- ($ (b0) + (-4mm,0mm) $);
      \node[anchor=east] at ($ (aroot) + (-15mm,0) $) {$T_h=$};
      \node[anchor=west] at ($ (broot) + (15mm,0) $) {$=T_{h+1}$};
      \draw ($ (aroot) + (20mm,0) $) edge[mogrifyarrow] node[fill=white,anchor=mid,inner xsep=.25mm,inner ysep=1mm] {$\cyc$} ($ (broot) + (-20mm,0) $);
      \begin{scope}[tinybst]
        \node[triangle] (aroot) at (0,-55mm) {$B_{\mathrlap{h}\,}$}
        child { node[triangle] (a0) {$B_{\mathrlap{g}\,}$}
          child { node[triangle] (a00) {$\lambda$} }
          child { node (a01) {$u_{i+1}$} }
        }
        child { node[triangle] (a1) {$\delta$} };
      \end{scope}
      \begin{scope}[tinybst]
        \node (broot) at ($ (aroot) + (60mm,0) $) {$u_{i+1}$}
        child { node[triangle] (b0) {$B_{\mathrlap{g}\,}$}
          child { node[triangle] (b00) {$\lambda$} }
          child[missing]
        }
        child { node[triangle] (b1) {$B_{\mathrlap{h}\,}$}
          child[missing]
          child { node[triangle] (b11) {$\delta$} }
        };
      \end{scope}
      \node[font=\small,inner sep=.25mm] (bbi0) at ($ (b0) + (-12mm,0mm) $) {$B_{h+1}$};
      \draw[bstoutline] \convexpath{b1,b0,broot}{4mm};
      \draw[bstoutline] (bbi0) -- ($ (b0) + (-4mm,0mm) $);
      \node[anchor=east] at ($ (aroot) + (-15mm,0) $) {$T_h=$};
      \node[anchor=west] at ($ (broot) + (15mm,0) $) {$= T_{h+1}$};
      \draw ($ (aroot) + (20mm,0) $) edge[mogrifyarrow] node[fill=white,anchor=mid,inner xsep=.25mm,inner ysep=1mm] {$\cyc$} ($ (broot) + (-20mm,0) $);
    \end{tikzpicture}
    \caption{Induction step, case~2, two sub-cases.}
    \label{fig:case2}
  \end{figure}%
  By P1, $B_h$ appears at the root of $T_h$; by P2, $B_g$ is next subtree $B_{i_j}$ on the path of left child nodes from
  the root of $T_h$ (and is thus in the left-minimal subtree of $B_h$). By \fullref{Proposition}{prop:infixreading} applied to $U$,
  the symbol $u_{h+1}$ is the unique symbol that is greater than every node of $B_g$ and less than every node of $B_h$. Then the
  node $u_{h+1}$ may be in one of two places in $T_h$, leading to the two sub-cases below. In both cases, as shown in
  \fullref{Figure}{fig:case2}, let $\lambda$ be a reading of the left subtree of $B_g$ and let $\delta$ be a reading of
  the right-maximal subtree of $B_h$; note that the subtrees $B_i$ for $u_i \in U_{h}^\top \setminus \set{u_g,u_h}$
  are contained in $\lambda$.
  \begin{enumerate}
  \item Suppose $u_{h+1}$ is the unique node on the path of left child nodes between $B_g$ and $B_h$. In this case, as
    shown in \fullref[(top)]{Figure}{fig:case2}, $T_h = \psylv{\lambda B_g u_{h+1}\delta B_h}$. Let
    $T_{h+1} = \psylv{\delta B_h\lambda B_g u_{h+1}}$; note that $T_h \cyc T_{h+1}$.

  \item Suppose $u_{h+1}$ is the unique node in the right-maximal subtree of $B_g$ and there are no nodes between $B_g$
    and $B_h$ on the path of left child nodes. In this case, as shown in \fullref[(bottom)]{Figure}{fig:case2},
    $T_h = \psylv{u_{h+1}\lambda B_g \delta B_h}$. Let $T_{h+1} = \psylv{\lambda B_g \delta B_hu_{h+1}}$; note that
    $T_h \cyc T_{h+1}$.
  \end{enumerate}
  In computing $T_{h+1}$, for both sub-cases, the symbol $u_{h+1}$ is inserted first and becomes the root node. Every
  symbol in $B_g$ and $\lambda$ is less than $u_{h+1}$, so these trees are re-inserted into the left subtree of
  $u_{h+1}$. Every symbol in $B_h$ and $\delta$ is greater than $u_{h+1}$, so these trees are re-inserted into the right
  subtree of $u_{h+1}$. Since $B_{h+1}$ consists of $u_{h+1}$ with $B_g$ as its left subtree and $B_h$ as its right
  subtree, the subtree $B_{h+1}$ appears at the root and so $T_{h+1}$ satisfies P1. All the other subtrees $B_i$ for
  $u_i \in U_{h+1}^\top$ are contained in $\lambda$, so $T_{h+1}$ satisfies P2.

  \smallskip
  \noindent\textit{Case 3.} Suppose $u_h$ is the left child of $u_{h+1}$. Then, by the definition of the postfix traversal, $u_{h+1}$
  has empty right subtree in $U$, and so $B_{h+1}$ consists of $u_{h+1}$ with left subtree $B_h$ and right subtree
  empty. (For an illustration of this case, see the step from $T_1$ to $T_2$ in \fullref{Example}{eg:sylvupperbound}.)
  \begin{figure}[t]
    \centering
    \begin{tikzpicture}
      \begin{scope}[tinybst]
        \node[triangle] (aroot) at (0,-30mm) {$B_{\mathrlap{h}\,}$}
        child { node[triangle] (a0) {$\lambda$} }
        child { node[triangle] (a1) {$\delta$}
          child { node (a10) {$u_{i+1}$}
            child[missing]
            child { node[triangle] (a101) {$\beta$} }
          }
          child[missing]
        };
      \end{scope}
      \begin{scope}[tinybst]
        \node (broot) at ($ (aroot) + (60mm,0) $) {$u_{i+1}$}
        child { node[triangle,level distance=7mm] (b0) {$B_{\mathrlap{h}\,}$}
          child { node[triangle] (b00) {$\lambda$} }
          child { node[triangle] (b01) {} }
        }
        child { node[triangle] (b1) {} };
      \end{scope}
      \node[anchor=east,font=\small,inner sep=.25mm] (bbiplus0) at ($ (b0) + (-8mm,0mm) $) {$B_{h+1}$};
      %
      \draw[bstoutline] \convexpath{broot,b0}{4mm};
      \draw[bstoutline] (bbiplus0) -- ($ (b0) + (-4mm,0mm) $);
      \node[anchor=east] at ($ (aroot) + (-15mm,0) $) {$T_h=$};
      \node[anchor=west] at ($ (broot) + (15mm,0) $) {$= T_{h+1}$};
      \draw ($ (aroot) + (20mm,0) $) edge[mogrifyarrow] node[fill=white,anchor=mid,inner xsep=.25mm,inner ysep=1mm] {$\cyc$} ($ (broot) + (-20mm,0) $);
    \end{tikzpicture}
    \caption{Induction step, case 3.}
    \label{fig:case3}
  \end{figure}
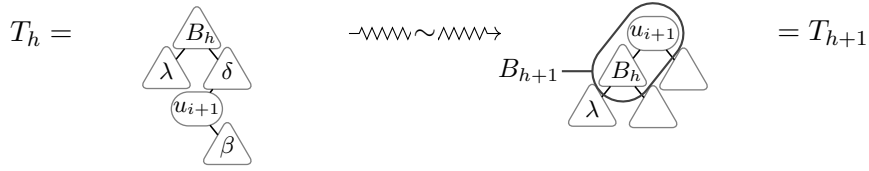%
  Proceeding in a similar way to the previous cases, one sees that, as in \fullref{Figure}{fig:case3},
  $T_h = \psylv{\beta u_{h+1}\delta\lambda B_h}$. Let $T_{h+1} = \psylv{\delta\lambda B_h\beta u_{h+1}}$; then
  $T_h \cyc T_{h+1}$ and $T_{h+1}$ satisfies P1 and P2.

  \smallskip
  \noindent\textit{Case 4.} Suppose that, in $U$, the node $u_h$ is the right child of $u_{h+1}$, and $u_{h+1}$ has
  empty left subtree. (For an illustration of this case, see the step from $T_4$ to $T_5$ in
  \fullref{Example}{eg:sylvupperbound}.) Thus $B_{h+1}$ consists of the node $u_{h+1}$ with empty left subtree and right
  subtree $U_h$.
  \begin{figure}[t]
    \centering
    \begin{tikzpicture}
      \begin{scope}[tinybst]
        \node[triangle] (aroot) at (0mm,-32.5mm) {$B_{\mathrlap{h}\,}$}
        child { node (a0) {$u_{i+1}$}
          child { node[triangle] (a00) {$\lambda$} }
          child[missing]
        }
        child { node[triangle] (a1) {$\delta$} };
      \end{scope}
      \begin{scope}[tinybst]
        \node (broot) at ($ (aroot) + (60mm,0) $) {$u_{i+1}$}
        child { node[triangle] (b0) {$\lambda$} }
        child { node[triangle] (b1) {$B_{\mathrlap{h}\,}$}
          child[missing]
          child { node[triangle] (b11) {$\delta$} }
        };
      \end{scope}
      \node[font=\small,inner sep=.25mm] (bbi0) at ($ (broot) + (-12mm,0mm) $) {$B_{h+1}$};
      \draw[bstoutline] \convexpath{b1,broot}{4mm};
      \draw[bstoutline] (bbi0) -- ($ (broot) + (-4mm,0mm) $);
      \node[anchor=east] at ($ (aroot) + (-15mm,0) $) {$T_h=$};
      \node[anchor=west] at ($ (broot) + (15mm,0) $) {$= T_{h+1}$};
      \draw ($ (aroot) + (20mm,0) $) edge[mogrifyarrow] node[fill=white,anchor=mid,inner xsep=.25mm,inner ysep=1mm] {$\cyc$} ($ (broot) + (-20mm,0) $);
      \begin{scope}[tinybst]
        \node[triangle] (aroot) at (0mm,-50mm) {$B_{\mathrlap{h}\,}$}
        child { node[triangle] (a0) {$\lambda$}
          child[missing]
          child { node (a01) {$u_{i+1}$}
            child { node[triangle] (a010) {$\zeta$} }
            child[missing]
          }
        }
        child { node[triangle] (a1) {$\delta$} };
      \end{scope}
      \begin{scope}[tinybst]
        \node (broot) at ($ (aroot) + (60mm,0) $) {$u_{i+1}$}
        child { node[triangle] (b0) {$\zeta$}
          child { node[triangle] (b00) {$\lambda$} }
          child[missing]
        }
        child { node[triangle] (b1) {$B_{\mathrlap{h}\,}$}
          child[missing]
          child { node[triangle] (b11) {$\delta$} }
        };
      \end{scope}
      \node[font=\small,inner sep=.25mm] (bbi0) at ($ (broot) + (-12mm,0mm) $) {$B_{h+1}$};
      \draw[bstoutline] \convexpath{b1,broot}{4mm};
      \draw[bstoutline] (bbi0) -- ($ (broot) + (-4mm,0mm) $);
      \node[anchor=east] at ($ (aroot) + (-15mm,0) $) {$T_h=$};
      \node[anchor=west] at ($ (broot) + (15mm,0) $) {$= T_{h+1}$};
      \draw ($ (aroot) + (20mm,0) $) edge[mogrifyarrow] node[fill=white,anchor=mid,inner xsep=.25mm,inner ysep=1mm] {$\cyc$} ($ (broot) + (-20mm,0) $);
    \end{tikzpicture}
    \caption{Induction step, case~4, two sub-cases.}
    \label{fig:case4}
  \end{figure}
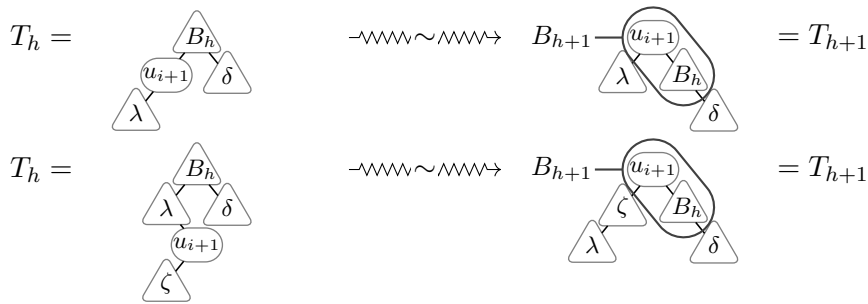%
  Proceeding in a similar way to the previous cases, one sees that there are two sub-cases, as in \fullref{Figure}{fig:case4}:
  \begin{enumerate}
  \item $T_h = \psylv{\lambda u_{h+1}\delta B_h}$. Let $T_{h+1} = \psylv{\delta B_h\lambda u_{h+1}}$.
  \item $T_h = \psylv{\zeta u_{h+1}\lambda \delta B_h}$. Let $T_{h+1} = \psylv{\lambda \delta B_h \zeta u_{h+1}}$.
  \end{enumerate}
  In both sub-cases, $T_h \cyc T_{h+1}$ and $T_{h+1}$ satisfies P1 and P2.


  \medskip
  \noindent\textit{Conclusion.}
  Thus there is a sequence $T = T_0, T_1, \ldots, T_n = U$ with $T_h \cyc T_{h+1}$ and $T_{h+1}$ satisfying P1 and P2
  for $h = 0,\ldots,h-1$. In particular, $T_n$ satisfies P1 and so the subtree $B_n = U$ appears in $T_n$, with its root
  at the root of $T_n$. Hence $T_n = U$.
\end{proof}

\bibliography{\jobname}
\bibliographystyle{alphaabbrv}

\end{document}